\documentclass[12pt,reqno]{amsart}

\usepackage{a4}
\usepackage{enumerate}
\usepackage{hyperref}

\DeclareMathOperator{\eend}{end}
\DeclareMathOperator{\grad}{grad}

\theoremstyle{plain}
\newtheorem*{theorem*}{Theorem}
\newtheorem*{conjecture*}{Conjecture}

\newcommand\itemref[1]{(\ref{#1})}
\newcommand\llangle{\langle\!\langle}
\newcommand\rrangle{\rangle\!\rangle}

\begin{document}

\title[Analytic eigenbranches]{Analytic eigenbranches\\ in the semi-classical limit}

\author{Stefan Haller}

\address{Stefan Haller,
         Department of Mathematics,
         University of Vienna,
         Os\-kar-Mor\-gen\-stern-Platz 1,
         1090 Vienna,
         Austria.}

\email{stefan.haller@univie.ac.at}

\begin{abstract}
We consider a one parameter family of Laplacians on a closed manifold and study the semi-classical limit of its analytically parametrized eigenvalues.
Our results are analogous to a theorem for scalar Schr\"odinger operators on Euclidean space by Luc Hillairet and apply to geometric operators like Witten's Laplacian associated with a Morse function.
\end{abstract}

\keywords{semi-classical limit; analytic eigenbranches; Witten Laplacian}

\subjclass[2010]{35P20}

\maketitle

Let $M$ be a closed smooth manifold and suppose $E$ is a complex vector bundle over $M$.
Fix a smooth volume density on $M$ and a smooth fiber wise Hermitian metric on $E$.
We will denote the associated $L^2$ scalar product on the space of sections, $\Gamma(E)$, by $\llangle-,-\rrangle$ and write $\|-\|$ for the corresponding $L_2$ norm.

Consider a one parameter family of operators acting on $\Gamma(E)$,
\begin{equation}\label{E:1}
\Delta_t=\Delta+tA+t^2V,\qquad t\in\mathbb R,
\end{equation}
where $\Delta$ is a selfadjoint (bounded from below) Laplacian and $A,V\in\Gamma(\eend(E))$ are smooth symmetric sections. 
This is a selfadjoint holomorphic family of type (A) in the sense of \cite[Section~VII\S2]{K66}.
According to the Kato--Rellich theorem, see \cite[Theorem~VII.3.9]{K66}, its eigenvalues can be organized in analytic families
referred to as \emph{analytic eigenbranches} of $\Delta_t$.
More precisely, there exist eigenbranches $\lambda_t$ and eigensections $\psi_t$, both analytic in $t\in\mathbb R$, such that
\begin{equation}\label{E:anfam}
\Delta_t\psi_t=\lambda_t\psi_t\qquad\text{and}\qquad\|\psi_t\|=1.
\end{equation}
Furthermore, it is possible to choose a sequence of analytic eigenbranches, $\lambda^{(k)}_t$, and corresponding analytic eigensections, $\psi_t^{(k)}$, such that at every time $t$, the sequence $\lambda_t^{(k)}$ exhausts all of the spectrum of $\Delta_t$, including multiplicities, and $\psi_t^{(k)}$ forms a complete orthonormal basis of eigensections. The analytic parametrization of the spectrum, $\lambda^{(k)}_t$, is unique up to renumbering.
The eigensections, on the other hand, are by no means canonical, and it seems more natural to consider the spectral projections instead.

In this note we study the semi-classical limit of the analytic eigenbranches, i.e., the behavior of $\lambda_t$ as $t\to\infty$.
We will show that $t^{-2}\lambda_t$ converges to a finite limit $\mu$, see part \itemref{T:b} of the theorem below.
Moreover, if the potential $V$ is scalar valued, i.e., if $V=v\cdot\operatorname{id}_E$ for a smooth function $v$, then $\mu$ has to be a critical value of $v$, see part~\itemref{T:i} in the theorem below.
These observations are analogous to a result of Luc Hillairet \cite{H09} who considered the scalar case on $M=\mathbb R^n$ with $A=0$.

While Hillairet's proof uses basic properties of semi-classical measures, our proof is entirely elementary and does not make use of this concept.
The ideas entering into the proof, however, appear to be essentially the same.
Avoiding semi-classical measures makes the generalization to the vector valued case considered here straight forward.
As in Hillairet's argument, convergence of $t^{-2}\lambda_t$ follows from the fact that this quantity, suitably corrected due of the presence of $A$, is bounded and monotone, cf.~\eqref{E:mono} below.
The fact that the Laplacian is semi-bounded enters crucially at this point.

The asymptotics of the spectral distribution function in the semi-classical limit has applications in quantum mechanics and geometric topology \cite{CFKS87, DS99, HS84, HS85}.
We merely mention Witten's influential paper \cite{W82} and and proofs of the Cheeger--M\"uller theorem \cite{BZ92,BFKM96,BFK96}.
In these geometric applications, a Morse function $f$ provides a deformation of the deRham differential, $d_t=e^{-tf}de^{tf}=d+tdf$.
The corresponding Witten Laplacian, $\Delta_t=(d_t+d_t^*)^2=d_td_t^*+d_t^*d_t=[d_t,d_t^*]$, is a one parameter family of operators acting on differential forms, i.e., $E=\Lambda^*T^*M$, which is of the type considered here with scalar valued $V=|df|^2$.
Hence, in this case the absolute minima of $V$ coincide with the critical points of $f$.

Let us return to a general one parameter family of operator considered above, see~\eqref{E:1}, and an analytic eigenbranch as in \eqref{E:anfam}.
Subsequently, we will use the notation $\dot\lambda_t:=\frac\partial{\partial t}\lambda_t$, $\dot\psi_t:=\frac\partial{\partial t}\psi_t$, and 
\begin{equation}\label{E:5}
\dot\Delta_t:=\tfrac\partial{\partial t}\Delta_t=A+2tV.
\end{equation}
We have the following analogue of Theorem~1 in \cite{H09}.

\begin{theorem*}\label{T}
For each analytic eigenbranch the following hold true:
\begin{enumerate}[(a)]
\item\label{T:a}
$\dot\lambda_t=O(t)$ and $\lambda_t=O(t^2)$, as $t\to\infty$.
\item\label{T:b}
$t^{-2}\lambda_t$ converges to a finite limit, 
\begin{equation}\label{E:mu}
\mu:=\lim_{t\to\infty}t^{-2}\lambda_t.
\end{equation}
\item\label{T:c}
$t\frac\partial{\partial t}(t^{-2}\lambda_t)$ is bounded and 
$$
\limsup_{t\to\infty}t\tfrac\partial{\partial t}(t^{-2}\lambda_t)=0.
$$
\item\label{T:d}
$t^{-2}\llangle\Delta\psi_t,\psi_t\rrangle$ is bounded and 
$$
\liminf_{t\to\infty}t^{-2}\llangle\Delta\psi_t,\psi_t\rrangle=0.
$$
\item\label{T:e}
The limit, $\mu$, has the following interpretations:
$$
\mu=\lim_{t\to\infty}t^{-2}\lambda_t
=\limsup_{t\to\infty}\llangle V\psi_t,\psi_t\rrangle
=\limsup_{t\to\infty}(2t)^{-1}\dot\lambda_t
=\limsup_{t\to\infty}\tfrac\partial{\partial t}(t^{-1}\lambda_t).
$$
\end{enumerate}
Furthermore, for each sequence $t_n\to\infty$ such that, cf.~\eqref{T:d},
\begin{equation}\label{E:tn}
\lim_{n\to\infty}t_n^{-2}\llangle\Delta\psi_{t_n},\psi_{t_n}\rrangle=0,
\end{equation}
the following hold true:
\begin{enumerate}[(a)]
\setcounter{enumi}{5}
\item\label{T:f}
For every positive integer $s\in\mathbb N$,
$$
\lim_{n\to\infty}t_n^{-s}\|\psi_{t_n}\|_{H^s(M)}=0,
$$
where $\|-\|_{H^s(M)}$ denotes any Sobolev $s$ norm on $\Gamma(E)$.
\item\label{T:h}
We have
\begin{equation}\label{E:66}
\lim_{n\to\infty}\|(V-\mu)\psi_{t_n}\|=0.
\end{equation}
Hence, the eigensections $\psi_{t_n}$ localize near $\Sigma_\mu:=\{x\in M:\det(V(x)-\mu)=0\}$.
In particular, for every open neighborhood $U$ of $\Sigma_\mu$, we have 
\begin{equation}\label{E:666}
\lim_{n\to\infty}\|\psi_{t_n}\|_{L^2(M\setminus U)}=0.
\end{equation}
\item\label{T:i}
If, moreover, the potential $V$ is multiplication by a (scalar) function, then
\begin{equation}\label{E:dVpsi}
\lim_{n\to\infty}\bigl\|\, |dV|^2\psi_{t_n}\,\bigr\|=0.
\end{equation}
Hence, the eigensections $\psi_{t_n}$ localize near the critical points of $V$. For every neighborhood $\tilde U$ of the critical set of $V$ we have 
\begin{equation}\label{E:jjkl}
\lim_{n\to\infty}\|\psi_{t_n}\|_{L^2(M\setminus\tilde U)}=0.
\end{equation}
In particular, $\mu$ has to be a critical value of $V$.
\end{enumerate}
\end{theorem*}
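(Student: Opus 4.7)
The plan is to dispatch parts (a)--(e) via Feynman--Hellmann together with a monotonicity argument for a corrected version of $t^{-2}\lambda_t$, and then, along any sequence $t_n$ satisfying \eqref{E:tn}, to establish (h) and the Sobolev bounds (f) from a single inner-product identity; finally (i) will follow by differentiating the eigen-equation along $\grad V$.

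\emph{Parts (a)--(e).} Feynman--Hellmann immediately gives $\dot\lambda_t = \llangle\dot\Delta_t\psi_t,\psi_t\rrangle = \llangle A\psi_t,\psi_t\rrangle + 2t\llangle V\psi_t,\psi_t\rrangle$, hence (a). After shifting $\Delta$ by a constant (which does not affect the limits) one may assume $\Delta\ge0$; direct computation then shows
\[
\tfrac{d}{dt}(t^{-2}\lambda_t) = -t^{-2}\llangle A\psi_t,\psi_t\rrangle - 2t^{-3}\llangle\Delta\psi_t,\psi_t\rrangle,
\]
so with $C:=\|A\|_\infty$ the corrected function $f(t):=t^{-2}\lambda_t + Ct^{-1}$ is non-increasing for $t>0$ and bounded below by $-\|V\|_\infty$. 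Monotone convergence is (b). Part (c) follows because $t\dot f(t)\le 0$, while any hypothetical $\limsup t\tfrac{d}{dt}(t^{-2}\lambda_t)<0$ would drive $t^{-2}\lambda_t$ to $-\infty$ upon integration; (d) is read off from the explicit formula for $t\dot f$; (e) is obtained by rewriting $\dot\lambda_t$ and $\tfrac{d}{dt}(t^{-1}\lambda_t)$ in terms of $\llangle V\psi_t,\psi_t\rrangle$, which converges to $\mu$ along any sequence realising \eqref{E:tn}.

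\emph{Part (h) and Part (f).} Fix $t_n$ satisfying \eqref{E:tn}. The identity $\|\Delta^{1/2}\psi_{t_n}\|^2 = \llangle\Delta\psi_{t_n},\psi_{t_n}\rrangle = o(t_n^2)$ together with elliptic regularity gives $\|\psi_{t_n}\|_{H^1}=o(t_n)$, which is (f) for $s=1$. Next, pairing the eigen-equation with $V\psi_t$ one gets
\[
t^2\|V\psi_t\|^2 = \lambda_t\llangle V\psi_t,\psi_t\rrangle - \llangle\Delta\psi_t,V\psi_t\rrangle - t\llangle VA\psi_t,\psi_t\rrangle.
\]
The middle term equals $\llangle\Delta^{1/2}\psi_t,\Delta^{1/2}V\psi_t\rrangle$; since $\|\Delta^{1/2}V\psi_{t_n}\|\lesssim\|V\psi_{t_n}\|_{H^1}\lesssim\|\psi_{t_n}\|_{H^1}=o(t_n)$, Cauchy--Schwarz renders it $o(t_n^2)$, and the last term is $O(t_n)$. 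Dividing by $t_n^2$ and using $t_n^{-2}\lambda_{t_n}\to\mu$ and $\llangle V\psi_{t_n},\psi_{t_n}\rrangle\to\mu$ yields $\|V\psi_{t_n}\|^2\to\mu^2$, hence $\|(V-\mu)\psi_{t_n}\|^2\to 0$, which is \eqref{E:66}. The localisation \eqref{E:666} is immediate because $(V-\mu)^2\ge c^2 I$ on the complement of any neighbourhood of $\Sigma_\mu$. The higher Sobolev estimates in (f) then come by induction on $s$: writing $\Delta\psi_t = (\lambda_t-t^2\mu)\psi_t - tA\psi_t - t^2(V-\mu)\psi_t$, each summand is $o(t_n^s)$ in $H^{s-2}$ once \eqref{E:66} and the case $s-2$ are known, and elliptic regularity promotes this to $\|\psi_{t_n}\|_{H^s}=o(t_n^s)$.

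\emph{Part (i).} The main technical step is to produce the factor $|dV|^2$ as a commutator. Setting $X:=\grad V$, one applies $\nabla_X$ to $(\Delta_t-\lambda_t)\psi_t=0$; because $V$ is a scalar function, $[\nabla_X,V] = dV(X) = |dV|^2$ as a multiplication operator, so
\[
(\Delta_t-\lambda_t)\nabla_X\psi_t = -[\nabla_X,\Delta]\psi_t - t(\nabla_XA)\psi_t - t^2|dV|^2\psi_t.
\]
Pairing with $\psi_t$ and exploiting selfadjointness of $\Delta_t-\lambda_t$ collapses the left side to zero, leaving
\[
t^2\llangle|dV|^2\psi_t,\psi_t\rrangle = -\llangle[\nabla_X,\Delta]\psi_t,\psi_t\rrangle - t\llangle(\nabla_XA)\psi_t,\psi_t\rrangle.
\]
Since $[\nabla_X,\Delta]$ is a second-order differential operator, $|\llangle[\nabla_X,\Delta]\psi_{t_n},\psi_{t_n}\rrangle|\lesssim\|\psi_{t_n}\|_{H^2}=o(t_n^2)$ by (f); the $t(\nabla_XA)$-term is $O(t_n)$. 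Dividing by $t_n^2$ yields $\llangle|dV|^2\psi_{t_n},\psi_{t_n}\rrangle\to 0$, which gives \eqref{E:dVpsi} because $|dV|^2$ is bounded. The localisation \eqref{E:jjkl} follows exactly as for \eqref{E:666}, and the final assertion that $\mu$ is a critical value combines \eqref{E:66} with \eqref{E:jjkl}. The hard part is spotting this single commutator identity; once it is in place, the estimate reduces entirely to the Sobolev bounds in (f).
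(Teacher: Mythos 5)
Your argument is correct, and while your (a)--(e) follow the paper's Feynman--Hellmann-plus-monotonicity route essentially verbatim, your treatment of (f)--(i) is genuinely different. For (h) you pair the eigen-equation with $V\psi_t$ and control the cross term $\llangle\Delta\psi_t,V\psi_t\rrangle$ via $\Delta^{1/2}$ and the form bound $\llangle\Delta u,u\rrangle\lesssim\|u\|_{H^1}^2$, so that \eqref{E:66} already follows from the $s=1$ case of (f); the paper instead proves all of (f) first (via the crude elliptic estimate $\|\psi_t\|_{H^{s+2}}\leq C_s(1+t^2)\|\psi_t\|_{H^s}$ for odd $s$ and a Cauchy--Schwarz interpolation for even $s$) and only then reads \eqref{E:66} off the eigen-equation using $s=2$. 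Your reversal buys a cleaner induction for (f): recentring the potential at $\mu$ in $\Delta\psi_t=(\lambda_t-t^2\mu)\psi_t-tA\psi_t-t^2(V-\mu)\psi_t$ makes every $s$ accessible directly and dispenses with the odd/even distinction --- though note that the base case $s=2$ rests on \eqref{E:66} together with $|\lambda_t-t^2\mu|=o(t^2)$ and $\|\psi_t\|=1$, not on a (false) ``case $s-2=0$'' of the induction hypothesis. For (i) you pair the commutator identity against $\psi_t$ itself and obtain only $\llangle|dV|^2\psi_{t_n},\psi_{t_n}\rrangle\to0$, upgrading to \eqref{E:dVpsi} via $\bigl\||dV|^2\psi\bigr\|^2\leq\sup|dV|^2\cdot\llangle|dV|^2\psi,\psi\rrangle$; this positivity trick is special to $\sigma=|dV|^2\geq0$, whereas the paper's longer computation \eqref{E:jkl}--\eqref{E:qwerty} yields $\|\sigma\psi_{t_n}\|\to0$ for the symbol of an arbitrary first-order operator $D$. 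Both routes reach all the stated conclusions; yours is shorter and avoids the interpolation step, the paper's commutator estimate is more general.
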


\begin{proof}
From \eqref{E:anfam} we obtain
\begin{equation}\label{E:3}
\llangle\Delta_t\psi_t,\psi_t\rrangle=\lambda_t.
\end{equation}
Differentiating the second equation in \eqref{E:anfam} we get
$$
\llangle\dot\psi_t,\psi_t\rrangle+\llangle\psi_t,\dot\psi_t\rrangle=0.
$$
Differentiating \eqref{E:3} and using the selfadjointness of $\Delta_t$ this leads to
\begin{equation}\label{E:4}
\llangle\dot\Delta_t\psi_t,\psi_t\rrangle=\dot\lambda_t.
\end{equation}
Combining this with \eqref{E:5} we obtain
\begin{equation}\label{E:55}
\dot\lambda_t=\llangle(A+2tV)\psi_t,\psi_t\rrangle=\llangle A\psi_t,\psi_t\rrangle+2t\llangle V\psi_t,\psi_t\rrangle.
\end{equation}
Since $A$ and $V$ are bounded operators, this implies $\dot\lambda_t=O(t)$, whence \itemref{T:a}.

From \eqref{E:1} and \eqref{E:5} we immediately get
$$
2\Delta_t-t\dot\Delta_t=2\Delta+tA.
$$
Combining this with \eqref{E:3} and \eqref{E:4} and using the boundedness of $A$, we obtain
\begin{multline}\label{E:9}
\tfrac\partial{\partial t}(t^{-2}\lambda_t)
=-t^{-3}\bigl(2\lambda_t-t\dot\lambda_t\bigr)
=-t^{-3}\llangle(2\Delta_t-t\dot\Delta_t)\psi_t,\psi_t\rrangle
\\
=-t^{-3}\llangle(2\Delta+tA)\psi_t,\psi_t\rrangle
=-2t^{-3}\llangle\Delta\psi_t,\psi_t\rrangle+O(t^{-2}).
\end{multline}
Hence, as $\Delta$ is bounded from below, there exists a constant $C$ such that
\begin{equation}\label{E:mono}
\tfrac\partial{\partial t}\bigl(t^{-2}\lambda_t+Ct^{-1}\bigr)\leq0,
\end{equation}
for sufficiently large $t$.
This shows that the quantity $t^{-2}\lambda_t+Ct^{-1}$ is monotone, for large $t$.
In view of \itemref{T:a} it is bounded too. Whence $t^{-2}\lambda_t+Ct^{-1}$ converges, as $t\to\infty$.
This immediately implies \itemref{T:b}.

Similarly, one can show \itemref{T:c} and \itemref{T:d}:
Rewriting \eqref{E:9} we get
\begin{equation}\label{E:10}
-t\tfrac\partial{\partial t}(t^{-2}\lambda_t)
=2t^{-2}\llangle\Delta\psi_t,\psi_t\rrangle+O(t^{-1}).
\end{equation}
As $t^{-2}\lambda_t$ is bounded we must have
\begin{equation}\label{E:11}
\limsup_{t\to\infty}t\tfrac\partial{\partial t}(t^{-2}\lambda_t)\geq0,
\end{equation}
for otherwise $t^{-2}\lambda_t$ would diverge logarithmically.
Moreover,
\begin{equation}\label{E:12}
\liminf_{t\to\infty}t^{-2}\llangle\Delta\psi_t,\psi_t\rrangle\geq0,
\end{equation}
since $\Delta$ is bounded from below. Combining \eqref{E:10}, \eqref{E:11} and \eqref{E:12} we obtain
$$
\limsup_{t\to\infty}t\tfrac\partial{\partial t}(t^{-2}\lambda_t)=0=\liminf_{t\to\infty}t^{-2}\llangle\Delta\psi_t,\psi_t\rrangle.
$$
This completes the proof of \itemref{T:c} and \itemref{T:d}, the statements on boundedness follow immediately from \itemref{T:a} and \eqref{E:10}.

To see \itemref{T:e} note that \eqref{E:1} and \eqref{E:3} give
$$
t^{-2}\lambda_t=t^{-2}\llangle\Delta\psi_t,\psi_t\rrangle+t^{-1}\llangle A\psi_t,\psi_t\rrangle+\llangle V\psi_t,\psi_t\rrangle.
$$
Using \itemref{T:d} we obtain the second equality in \itemref{T:e}. The third equality follows from \eqref{E:55}.
The last one is immediate using $\frac\partial{\partial t}(t^{-1}\lambda_t)=t^{-1}\dot\lambda_t-t^{-2}\lambda_t$.

To see \itemref{T:f}, note first that the case $s=1$ is immediate from \eqref{E:tn} since there exists a constant $C$ such that $\|\psi\|_{H^1(M)}^2\leq C(\llangle\Delta\psi,\psi\rrangle+\|\psi\|^2)$.
Using the eigensection equation $\Delta_t\psi_t=\lambda_t\psi_t$ and \eqref{T:a}, one obtains constants $C_s$ such that 
$$
\|\psi_t\|_{H^{s+2}(M)}\leq C_s(1+t^2)\|\psi_t\|_{H^s(M)}.
$$
This permits to establish \itemref{T:f} inductively for all odd integers $s\in\mathbb N$.
The even case can be reduced to the odd one using $\|\psi\|^2_{H^2(M)}\leq C'\|\psi\|_{H^1(M)}\|\psi\|_{H^3(M)}$, an estimate which readily follows from the Cauchy--Schwarz inequality.

Using the estimate in \eqref{T:f} for $s=2$, the eigensection equation,
$$
t^{-2}\Delta\psi_t+t^{-1}A\psi_t+(V-\mu)\psi_t=(t^{-2}\lambda_t-\mu)\psi_t,
$$
implies \eqref{E:66}.
As $V-\mu$ is invertible over the compact set $M\setminus U$, there exists a constant $c>0$ such that $c^2\leq(V-\mu)^*(V-\mu)$, over each point in $M\setminus U$. Consequently, 
$$
c\,\|\psi\|_{L^2(M\setminus U)}\leq\|(V-\mu)\psi\|_{L^2(M\setminus U)}.
$$ 
Combining this with \eqref{E:66}, we obtain \eqref{E:666}.

Let us now turn to the proof of \itemref{T:i}.
Suppose $D$ is a first order differential operator acting on sections of $E$.
Since $V$ is scalar valued, the commutator $\sigma:=[D,V]=DV-VD$ is a differential operator of order zero, namely the principal symbol, $\sigma=\sigma_D(dV)\in\Gamma(\eend(E))$.
As $\Delta$ is a Laplacian, the commutator $[D,\Delta]$ is a differential operator of order at most two.
Using 
$$
[D,t^{-2}\Delta_t]=t^{-2}[D,\Delta]+t^{-1}[D,A]+\sigma,
$$ 
the Cauchy--Schwarz inequality and \itemref{T:f}, we thus obtain
\begin{equation}\label{E:jkl}
\|\sigma\psi_{t_n}\|^2=\llangle[D,t_n^{-2}\Delta_{t_n}]\psi_{t_n},\sigma\psi_{t_n}\rrangle+o(1),\qquad\text{as $n\to\infty$.}
\end{equation}
Using $\Delta_t\psi_t=\lambda_t\psi_t$ one readily checks
\begin{equation}\label{E:qwerty}
\llangle[D,t^{-2}\Delta_t]\psi_t,\sigma\psi_t\rrangle
=\llangle D\psi_t,[\sigma,t^{-2}\Delta_t]\psi_t\rrangle.
\end{equation}
Since $V$ is scalar valued we have $[\sigma,V]=0$ and 
$$
t[\sigma,t^{-2}\Delta_t]=t^{-1}[\sigma,\Delta]+[\sigma,A],
$$
where $[\sigma,\Delta]$ is a differential operator of order at most one.
Proceeding as above, the Cauchy--Schwarz inequality and \itemref{T:f} yield
$$
\lim_{n\to\infty}\llangle D\psi_{t_n},[\sigma,t_n^{-2}\Delta_{t_n}]\psi_{t_n}\rrangle=0.
$$
Combining the latter with \eqref{E:jkl} and \eqref{E:qwerty}, we arrive at
\begin{equation}\label{E:jkll}
\lim_{n\to\infty}\|\sigma\psi_{t_n}\|=0.
\end{equation}
Specializing to $D=\nabla_X$ where $\nabla$ is some linear connection on $E$ and $X$ is a vector field on $M$, we obtain $\sigma=X\cdot V=dV(X)$.
Choosing $X=\grad(V)$ with respect to some auxiliary Riemannian metric on $M$, we have $\sigma=|dV|^2$, and \eqref{E:jkll} becomes \eqref{E:dVpsi}.
On $M\setminus\tilde U$ we have $0<\tilde c\leq|dV|^2$ for some constant $\tilde c$, hence
$$
\tilde c\,\|\psi\|_{L^2(M\setminus\tilde U)}\leq \bigl\|\,|dV|^2\psi\bigr\|_{L^2(M\setminus\tilde U)},
$$
and thus \eqref{E:dVpsi} implies \eqref{E:jjkl}.
Combining the latter with \itemref{T:h}, we see that $\mu$ has to be a critical value of $V$.
This completes the proof of the theorem.
\end{proof}

\section*{Concluding remarks}

At the very end of Section~3 in \cite{H09}, Hillairet points out that for $M=\mathbb R^1$ and non-degenerate minima, the limit $\mu$ has to be the absolute minimum of $V$.
Indeed, in this situation the spectrum is known to be simple, hence the analytic eigenbranches cannot cross and remain in the same order: $\lambda_t^{(1)}<\lambda_t^{(2)}<\cdots$
The semi-classical asymptotics of the $k$-th eigenvalue, however, is governed by the deepest wells.

\begin{conjecture*}
If the `minima' of $V$ are non-degenerate in the sense of Shubin \cite[Condition~C on page~378]{S96} and $M$ is connected, then $t^{-2}\lambda_t$ converges to the absolute minimum of $V$.
\end{conjecture*}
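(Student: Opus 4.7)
The plan is to extend the rescaling argument from the proof of~\itemref{T:i} so that it pins down the limit $\mu$ as a non-degenerate \emph{minimum} value of $V$; the hypothesis that all minima of $V$ take the value $v_0:=\min V$ will then yield $\mu=v_0$. Throughout I assume, as in parts~\itemref{T:h}--\itemref{T:i}, that $V$ is scalar valued.

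Let $\lambda_t$ be an analytic eigenbranch with limit $\mu$ and let $t_n\to\infty$ be a sequence as in~\eqref{E:tn}. By parts~\itemref{T:h} and~\itemref{T:i} the sections $\psi_{t_n}$ localize near $\Sigma_\mu=\{V=\mu,\ dV=0\}$, which is a finite set by non-degeneracy. Passing to a subsequence, pick $p\in\Sigma_\mu$ that retains a positive fraction of the $L^2$-mass of $\psi_{t_n}$ in some fixed geodesic ball about $p$; working in geodesic normal coordinates about $p$, with $E$ trivialized by parallel transport, introduce
$$
\tilde\psi_n(y):=t_n^{-n/4}\,\psi_{t_n}\bigl(y/\sqrt{t_n}\bigr),\qquad y\in\mathbb R^n,
$$
which preserves $L^2$-norms.

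Inserting the Taylor expansion $V(y/\sqrt{t_n})=\mu+\tfrac1{2t_n}\langle V''(p)y,y\rangle+O(t_n^{-3/2})$ into the eigensection equation $\Delta_{t_n}\psi_{t_n}=\lambda_{t_n}\psi_{t_n}$, rescaling, and dividing by $t_n$ yields, after absorbing the curvature, bundle connection and $A$-perturbation errors via the Sobolev bounds of part~\itemref{T:f},
$$
\Delta_y\tilde\psi_n+\tfrac12\langle V''(p)y,y\rangle\tilde\psi_n+A(p)\tilde\psi_n-\Lambda_n\tilde\psi_n\longrightarrow 0\quad\text{in }L^2_{\mathrm{loc}}(\mathbb R^n,E_p),
$$
where $\Lambda_n:=(\lambda_{t_n}-\mu t_n^2)/t_n$. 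Extracting a further subsequence with $\Lambda_n\to\Lambda\in\mathbb R$ and a non-trivial weak $L^2(\mathbb R^n,E_p)$-limit $\tilde\psi_n\rightharpoonup\tilde\psi_\infty\neq 0$, the limit is an $L^2$-eigenfunction of the model operator $\Delta_y+\tfrac12\langle V''(p)y,y\rangle+A(p)$ on $\mathbb R^n$ with eigenvalue $\Lambda$. This operator admits $L^2$-eigenfunctions only when $V''(p)$ is positive definite; hence $p$ must be a non-degenerate local minimum of $V$, and the hypothesis on the minima of $V$ forces $V(p)=\mu=v_0$.

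The hard part is the quantitative control required to run this blow-up. One must (i)~sharpen the theorem's qualitative bound $\lambda_{t_n}=\mu t_n^2+o(t_n^2)$ to $\lambda_{t_n}=\mu t_n^2+O(t_n)$ along the subsequence, so that $\Lambda_n$ is bounded---this is not automatic from~\eqref{E:tn} and requires an additional estimate controlling $\llangle\Delta\psi_{t_n},\psi_{t_n}\rrangle$ at order $t_n$, not just $o(t_n^2)$. More delicately, one must (ii)~prevent mass of $\tilde\psi_n$ from escaping to infinity in the rescaled variable: since $\tilde\psi_n$ is supported in balls of radius $\sqrt{t_n}\varepsilon\to\infty$, weak convergence alone could produce $\tilde\psi_\infty=0$. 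The natural fix is a semi-classical Agmon-type estimate on $M$, based on the positivity of $V-\mu$ near $\Sigma_\mu$, producing uniform Gaussian tails for $\tilde\psi_n$ in the directions in which $V''(p)$ has positive eigenvalues, combined with a concentration-compactness argument to handle any remaining non-positive directions (which will in fact be excluded by the contradiction above). Once this quantitative refinement of the concentration statement is in hand, the clean conclusion follows as sketched.
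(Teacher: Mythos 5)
The statement you are trying to prove is labeled a \emph{conjecture} in the paper and is not proved there; the concluding remarks make clear the author regards it as open, and even the apparently weaker estimate $t^{-2}\lambda_t=\mu+O(t^{-1})$ in \eqref{E:xyz} (which you need in order to make $\Lambda_n$ bounded) is explicitly flagged as ``unclear to the author.'' Your blow-up strategy is reasonable in spirit, but the two difficulties you yourself flag are precisely the obstructions that make this a conjecture rather than a theorem, and the proposal does not resolve them. For (i), the bound $\lambda_{t_n}=\mu t_n^2+O(t_n)$ is exactly the estimate the paper lists as equivalent to the five unproven statements \itemref{I:V}--\itemref{I:DD} and leaves open; asserting it requires genuinely new input beyond anything established in the theorem. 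For (ii), the localization \eqref{E:666} only controls mass at the unrescaled scale, so after dilating by $\sqrt{t_n}$ the functions $\tilde\psi_n$ live on balls of radius $\varepsilon\sqrt{t_n}\to\infty$ and the weak limit may well vanish; naming ``Agmon-type estimates'' and ``concentration-compactness'' does not supply the uniform exponential decay they would require, and proving such decay in the directions where $V''(p)$ is not positive is itself the heart of the difficulty, not a routine fix.

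There is also a logical gap independent of (i) and (ii): even granting a nontrivial $L^2$ limit solving the model harmonic-oscillator equation with $V''(p)>0$, this would only show that $\mu$ is the value of $V$ at \emph{some} non-degenerate local minimum. Shubin's Condition~C constrains the set where $V$ attains its \emph{absolute} minimum; it imposes nothing on local minima at higher levels, which could be non-degenerate and would pass every test your blow-up applies. The step ``the hypothesis on the minima of $V$ forces $V(p)=\mu=v_0$'' is therefore unjustified. The paper's own discussion surrounding the conjecture---Hillairet's one-dimensional argument via simplicity of the spectrum, and the closing ``intriguing problem'' about eigenbranches not governed by the deepest wells---indicates that ruling out concentration at higher-level wells is the real content of the conjecture and likely requires tracking how analytic eigenbranches reorder at crossings, rather than a pointwise blow-up near a single concentration point.
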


From \eqref{E:mono} we see that there exists a constant $C$ such that 
$$
\mu-Ct^{-1}\leq t^{-2}\lambda_t
$$
for sufficiently large $t$.
It is unclear to the author, if a similar estimate from above holds true, i.e., if we have $t^{-2}\lambda_t=\mu+O(t^{-1})$ as $t\to\infty$.

As $t\to\infty$, the following statements are equivalent:
\begin{enumerate}[(a)]
\item\label{I:V} $\llangle\Delta\psi_t,\psi_t\rrangle=O(t)$
\item\label{I:D} $\llangle(V-\mu)\psi_t,\psi_t\rrangle=O(t^{-1})$
\item\label{I:alpha} $\frac\partial{\partial t}(t^{-\alpha}(\lambda_t-\mu t^2))=O(t^{-\alpha})$ for one (and then all) real $\alpha\neq1$.
\item\label{I:VV} $\frac\partial{\partial t}(t^{-2}\lambda_t)=O(t^{-2})$ 
\item\label{I:DD} $\dot\lambda_t-2t\mu=O(1)$
\end{enumerate}
Indeed, the equivalence $\itemref{I:V}\Leftrightarrow\itemref{I:VV}$ follows from \eqref{E:10}, the equivalence $\itemref{I:D}\Leftrightarrow\itemref{I:DD}$ follows from \eqref{E:55}, and the equivalence $\itemref{I:alpha}\Leftrightarrow\itemref{I:VV}\Leftrightarrow\itemref{I:DD}$ follows from \eqref{E:mu}.
Moreover, if these five (equivalent) statements hold true, then clearly
\begin{equation}\label{E:xyz}
t^{-2}\lambda_t=\mu+O(t^{-1}),\qquad\text{as $t\to\infty$.}
\end{equation}

Suppose \eqref{E:xyz} holds true.
Moreover, assume that $\mu$ is the absolute minimum of $V$ and the minima are all non-degenerate in the sense of \cite[Condition~C on page~378]{S96}.
Then, according to \cite[Theorem~1.1]{S96} or \cite[Theorem~11.1]{CFKS87}, there exists an eigenvalue $\omega$ of the harmonic oscillator associated with the minima (deepest wells) such that 
\begin{equation}\label{E:bestcase}
\lambda_t=\mu t^2+\omega t+O(t^{4/5}),\qquad\text{as $t\to\infty$.}
\end{equation}
Better estimates are available if the geometry is flat near the minima, cf.\ \cite[Equation~(1.15)]{S96}.
Under additional assumptions \cite{S83} one even obtains a full asymptotic expansion in terms of integral powers of $t$.
Furthermore, for every eigenvalue $\omega$ of the harmonic oscillator associated with the minima of $V$, there exists an analytic eigenbranch for which \eqref{E:bestcase} holds true with $\mu$ the absolute minimum of $V$ and the number of these eigenbranches coincides with the multiplicity of $\omega$.
An intriguing problem remains open: Are there analytic eigenbranches with a different asymptotics which are not governed by the deepest wells?

\section*{Acknowledgments}

The author is indebted to Dan Burghelea for encouraging discussions.
Part of this work was done while the author enjoyed the hospitality of the Max Planck Institute for Mathematics in Bonn. 
He gratefully acknowledges the support of the Austrian Science Fund (FWF): project number P31663-N35.

\end{document}